\newtheorem{thm}{Theorem}[section]
\theoremstyle{definition}
\theoremstyle{remark}
\newtheorem{rem}[thm]{Remark}
\newtheorem{example}[thm]{Example}
\numberwithin{equation}{section}
\begin{document}

\title[]{A Global Hartman-Grobman Theorem }%
\author{X. Wang}%
\address{Department of Mathematics and Statistics\\ Texas Tech University\\Lubbock, TX 79409}%
\email{alex.wang@ttu.edu}%

\begin{abstract}
We showed that
for any bounded neighborhood of a hyperbolic equilibrium point  $x_0$, there is a transformation which is locally homeomorphism, such that
the system is changed into a linear system in this neighborhood.

If the eigenvalues of $Df(x_0)$ are all located in the left-half complex plane, then there is a homeomorphism on the whole region of attraction such that the nonlinear system on the region of attraction is changed into  a linear system under such a coordinate change.
\end{abstract}
\maketitle
\section{Introduction}

Reducing a nonlinear system
\begin{equation}\label{eq2}
\dot{x}=f(x)
\end{equation}
to a simpler form by choosing correct coordinates has always been a research direction. Since linear systems
are simplest and well studied, it is always a desire to change it to a linear one.
Poincar\'{e} in his dissertation showed that if $f$ is analytic at the equilibrium point $x_0$, and the eigenvalues of $Df(x_0)$
are nonresonant, then there is a formal power series of change of variable to change~(\ref{eq2}) to a linear system~\cite{poi90,arn88}.
Hartman and Grobman showed that if $f$ is continuously differentiable, then there is a neighborhood of a hyperbolic equilibrium point
and a homeomorphism on this neighborhood, such that the system in this neighborhood is changed to a linear system under such
a homeomorphism~\cite{gro59,har60,har64,per01}.

In this paper, we extend the Hartman-Grobman theorem to any bounded neighborhood of a hyperbolic equilibrium point, and show that
for any bounded neighborhood of a hyperbolic equilibrium point  $x_0$, there is a transformation which is locally homeomorphism, such that
the system is changed into a linear system in this neighborhood. Of course, such a transformation can not be a homeomorphism on this neighborhood because one could enlarge the neighborhood to include another equilibrium point. However we are able to show that
in the cases that either  all the eigenvalues of $Df(x_0)$ are located in the left-half complex plane, or in the right-half
complex plane, there is a homeomorphism such that the system on the region of attraction (or on the region of repulsion if the eigenvalues are located in the right-half complex plane) is change to a linea system.

\section{Hartman-Grobman Theorem on Any Bounded Region}

By applying a translation, we can always assume $0$ is the hyperbolic equilibrium point of~(\ref{eq2}).

\begin{thm} \label{thm1} Let $E$ be an open set of $\mathbb{R}^n$ containing the origin,  $f:E\to \mathbb{R}^n$ be a $C^1$ function on $E$, $0$ be a hyperbolic
equilibrium point of the system~(\ref{eq2}), and $N_{_M}=\{x: \|x\|<M\}$ be the neighborhood of the origin of radius $M$.
For any $M,\epsilon >0$ such that $\overline{N}_{_{M+\epsilon}}\subset E$, there exists a transformation $y=H(x)$, $H(0)=0$ and $H$ is a homeomorphism in a neighborhood of $0$, such that the system~(\ref{eq2})
is changed into the linear system
$$\dot{y}=Ay, \ \ \ \ A=Df(0)$$
in $N_{_M}$.
\end{thm}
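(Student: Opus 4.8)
The plan is to reduce the statement to the classical, local Hartman--Grobman theorem by first replacing $f$ with a globally defined vector field that is linear outside $N_{M+\eps}$, and then transporting the local conjugacy across $N_M$ by the flow. First I would fix a $C^1$ cut-off function $\rho:\mathbb{R}^n\to[0,1]$ with $\rho\equiv 1$ on $\overline{N}_M$ and $\mathrm{supp}\,\rho\subset N_{M+\eps}$, and set
\[
\tilde f(x)=Ax+\rho(x)\bigl(f(x)-Ax\bigr)=Ax+\tilde g(x),\qquad \tilde g:=\rho\cdot(f-A).
\]
Because $\overline{N}_{M+\eps}\subset E$, the field $\tilde f$ is $C^1$ on all of $\mathbb{R}^n$, agrees with $f$ on $N_M$, and equals $Ax$ outside the bounded set $N_{M+\eps}$; hence $\tilde g$ has compact support and the flow $\tilde\phi_t$ of $\dot x=\tilde f(x)$ is complete and coincides with $e^{At}$ far from the origin.

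Next I would construct a conjugacy for the complete system $\dot x=\tilde f(x)$ by the contraction-mapping version of Hartman--Grobman. Using an adapted norm for the hyperbolic splitting $\mathbb{R}^n=E^s\oplus E^u$ of $A$, with $\norm{e^{At}|_{E^s}}\le e^{-\alpha t}$ and $\norm{e^{-At}|_{E^u}}\le e^{-\beta t}$ for $t>0$ and some $\alpha,\beta>0$, I would write the time-$t$ map as $\tilde\phi_t=e^{At}+w_t$ and seek $H=\mathrm{id}+h$ with $h$ bounded and continuous solving $H\circ e^{At}=\tilde\phi_t\circ H$. The standard fixed-point operator built from the stable and unstable projections is a contraction once $\mathrm{Lip}(w_t)$ is smaller than the hyperbolicity gap $1-\max(e^{-\alpha t},e^{-\beta t})$. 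A Gronwall estimate gives $\mathrm{Lip}(w_t)\le \mathrm{Lip}(\tilde g)\,t\,e^{(2\norm{A}+\mathrm{Lip}(\tilde g))t}$, while the gap is $\sim t\min(\alpha,\beta)$ as $t\to0$; the factors of $t$ cancel, so it suffices to take $t$ small provided $\mathrm{Lip}(\tilde g)<\min(\alpha,\beta)$. Solving for $h$ and, symmetrically, for the inverse produces a homeomorphism $H$ of $\mathbb{R}^n$ with $H-\mathrm{id}$ bounded and $H\circ e^{At}=\tilde\phi_t\circ H$; the usual averaging of $H$ along the flow upgrades this to $H\circ e^{As}=\tilde\phi_s\circ H$ for all $s$.

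Finally I would restrict to $N_M$: there $\tilde f=f$, so $\tilde\phi_s=\phi_s$ for any trajectory segment lying in $N_M$, and therefore $H$ carries the original system into $\dot y=Ay$ throughout $N_M$, with $H(0)=0$. Since $\tilde g(0)=0$ and $D\tilde g(0)=0$, on a sufficiently small ball about the origin $\mathrm{Lip}(\tilde g)$ is arbitrarily small, so the local theorem applies verbatim and $H$ is a homeomorphism on a neighborhood of $0$. The main obstacle is precisely the contraction condition $\mathrm{Lip}(\tilde g)<\min(\alpha,\beta)$: cutting $f$ off does not make this Lipschitz constant small when $M$ is large, so the global contraction need not hold on $N_{M+\eps}$. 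I expect to circumvent this by not demanding that $H$ be injective on all of $N_M$ --- only near $0$ --- and instead propagating the conjugation relation $H\circ\phi_s=e^{As}\circ H$ outward from a small neighborhood of the origin along the flow, which is where the bulk of the technical work, and the reason $H$ is only a local homeomorphism, will lie.
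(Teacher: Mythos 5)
Your proposal has a genuine gap, and it is exactly the one you flag at the end. The contraction-mapping construction of a global conjugacy $H\circ e^{At}=\tilde\phi_t\circ H$ requires $\mathrm{Lip}(\tilde g)<\min(\alpha,\beta)$, which cutting off $f$ outside $N_{M+\epsilon}$ does nothing to ensure when $M$ is large; and the fallback you sketch --- propagating the conjugacy outward from a small neighborhood $U$ of the origin via $H(x)=e^{-As}H(\phi_s(x))$ --- cannot define $H$ on all of $N_{M}$, because it needs the orbit of $x$ to visit $U$ at some finite time. When $A$ has both stable and unstable eigenvalues, orbits off the stable and unstable sets never come near the origin, and worse, $N_{M}$ may contain other equilibria or periodic orbits of $f$, whose orbits certainly never enter $U$. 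Indeed, no argument that produces an honest conjugacy (equivalently, an injective $H$) on $N_{M}$ can succeed: as the paper observes, any transformation carrying the system to $\dot y=Ay$ must send \emph{every} equilibrium of $f$ in $N_{M}$ to $0$, so $H$ cannot be one-to-one on $N_{M}$ in general. The theorem's conclusion is deliberately weaker than conjugacy: $H$ is a homeomorphism only near $0$, and ``changed into the linear system in $N_{M}$'' means only that $y(t)=H(x(t))$ satisfies $\dot y=Ay$ at those times when $x(t)\in N_{M}$.

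The missing idea is the paper's explicit Lyapunov--Perron-type formula, which sidesteps both injectivity and any smallness condition. Writing $f(x)=Ax+W(x)$ with $A=\mathrm{diag}(P,N)$ split into unstable and stable blocks, and cutting off only the nonlinearity, $\hat W_i(x)=W_i(\alpha(x)x)$ (so $\hat W_i$ is bounded and equals $W_i$ on $N_{M}$), the paper sets
$$H(x)=x+\left[\begin{array}{c}\int_0^\infty e^{-Ps}\hat W_1(\phi_s(x))\,ds\\ -\int_{-\infty}^0 e^{-Ns}\hat W_2(\phi_s(x))\,ds\end{array}\right],$$
where $\phi_s$ is the flow of the \emph{original} field $f$. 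Convergence needs only boundedness of $\hat W$, not a Lipschitz bound; $DH(0)=I$ gives the local homeomorphism by the inverse function theorem; and a direct differentiation along solutions shows $\frac{d}{dt}H(x(t))=AH(x(t))$ whenever $x(t)\in N_{M}$, because there $W=\hat W$ and the correction terms cancel. Your cutoff step is close in spirit to the paper's, but without this integral formula (or some substitute that likewise never demands injectivity on $N_{M}$), the remainder of your plan cannot be completed.
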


\begin{proof}
Without lose of generality by applying a linear change of coordinates, assume $A=Df(0)$ has the form of
$$A=\left[\begin{array}{cc}
P & 0\\
0 & N\end{array}\right]$$
where the eigenvalues of $P$ are located in the right complex plane, and the eigenvalues of $N$ are located in the left complex plane.
Write
$$f(x)=\left[\begin{array}{cc}
P & 0\\
0 & N\end{array}\right]x+\left[\begin{array}{c}
W_1(x)\\
W_2(x)\end{array}\right]$$
where $\left[\begin{array}{c}
W_1(x)\\
W_2(x)\end{array}\right]=f(x)-Df(0)x$ has the properties that $W_i(0)=0$ and $DW_i(0)=0$, $i=1,2.$

For any $M,\epsilon >0$ such that $\overline{N}_{_{M+\epsilon}}\subset E$, let $\alpha(x)$ be a $C^\infty$ function such that
\begin{equation}\label{alpha}
\alpha(x)=\left\{\begin{array}{ll}
1,&\|x\|\leq M\\
0,&\|x\|\geq M+\epsilon
\end{array}\right. \ \ \ \mbox{and}\ \ \  \hat{W}_i(x)=W_i(\alpha(x)x),\ i=1,2.
\end{equation}
Then $W_i(x)=\hat{W}_i(x)$, i=1,2, for all $x$ with $\|x\|<M$.
Let $\phi_t(x)$ be the flow of the vector field $f(x)$, i.e.
$x(t)=\phi_t(x_0)$ is the solution of the initial value problem
$$\dot{x}=f(x),\ \ \ \  x(0)=x_0.$$
Define
\begin{equation} \label{map}
H(x)=x+\left[\begin{array}{c}
\int_0^\infty e^{-Ps}\hat{W}_1(\phi_s(x))\ ds\\ \\
-\int_{-\infty}^0 e^{-Ns}\hat{W}_2(\phi_s(x))\ ds
\end{array}\right].
\end{equation}
Since $\hat{W}_i$ are continuous on $\overline{N}_{_{M+\epsilon}}$, $\hat{W}_i(\phi_s(x))$ is bounded for all $s$. So the integrals in
the definition of $H$ converge, and $H$ is well defined.

Since $\hat{W}_i(0)=W_i(0)=0$ and  $D\hat{W}_i(0)=Dw_i(0)=0$, we have $DH(0)=I$, and by the inverse function theorem, $H$ is a local homeomorphism in a neighborhood of $0$.

For any solution $x(t)$ of the differential equation
$\dot{x}=f(x),$
\begin{eqnarray*}
H(x(t))&=&x(t)+\left[\begin{array}{c}
\int_0^\infty e^{-Ps}\hat{w}_1(\phi_s(x(t)))\ ds\\ \\
-\int_{-\infty}^0 e^{-Ns}\hat{w}_2(\phi_s(x(t)))\ ds
\end{array}\right]\\ \\
&=&x(t)+\left[\begin{array}{c}
\int_0^\infty e^{-Ps}\hat{w}_1(x(t+s))\ ds\\ \\
-\int_{-\infty}^0 e^{-Ns}\hat{w}_2(x(t+s))\ ds
\end{array}\right]\\ \\
&=&x(t)+\left[\begin{array}{c}
\int_t^\infty e^{-P(\tau-t)}\hat{w}_1(x(\tau))\ d\tau\\ \\
-\int_{-\infty}^t e^{-N(\tau-t)}\hat{w}_2(x(\tau))\ d\tau
\end{array}\right].
\end{eqnarray*}
So
under the transformation
$y=H(x)$
\begin{eqnarray*}
\dot{y}&=&\frac{d}{dt}H(x(t))\\
&=&\dot{x}-\left[\begin{array}{c}
\hat{w}_1(x(t))\\
\hat{w}_2(x(t))
\end{array}\right]+\left[\begin{array}{cc}
P & 0\\
0 & N\end{array}\right]\left[\begin{array}{c}
\int_t^\infty e^{-P(\tau-t)}\hat{w}_1(x(\tau))\ d\tau\\ \\
-\int_{-\infty}^t e^{-N(\tau-t)}\hat{w}_2(x(\tau))\ d\tau
\end{array}\right]\\ \\
&=&Ax+\left[\begin{array}{c}
w_1(x)\\
w_2(x)
\end{array}\right]-\left[\begin{array}{c}
\hat{w}_1(x)\\
\hat{w}_2(x)
\end{array}\right]+A\left[\begin{array}{c}
\int_0^\infty e^{-Ps}\hat{w}_1(\phi_s(x))\ d\tau\\ \\
-\int_{-\infty}^0 e^{-Ns}\hat{w}_2(\phi_s(x))\ d\tau
\end{array}\right]\\
&=& Ay
\end{eqnarray*}
for all $x\in N_{_M}$.
\end{proof}

It should be pointed out that the transformation $y=H(x)$ is not necessary a homeomorphism on $N_{_M}$.
Clearly $H$ maps all the equilibrium points of $\dot{x}=f(x)$ in $N_{_M}$ to $0$, because for any  equilibrium $x_0\in N_{_M}$
\begin{eqnarray*}
H(x_0)&=&x_0+\left[\begin{array}{c}
\int_0^\infty e^{-Ps}\hat{w}_1(\phi_s(x_0))\ ds\\ \\
-\int_{-\infty}^0 e^{-Ns}\hat{w}_2(\phi_s(x_0))\ ds
\end{array}\right]\\ \\
&=&x_0+\left[\begin{array}{c}
\int_0^\infty e^{-Ps}\ ds\ w_1(x_0)\\ \\
-\int_{-\infty}^0 e^{-Ns}\ ds\ w_2(x_0)
\end{array}\right]\\ \\
&=&x_0+\left[\begin{array}{c}
P^{-1} w_1(x_0)\\
N^{-1} w_2(x_0)
\end{array}\right]\\
&=&A^{-1}f(x_0)=0
\end{eqnarray*}

Theorem~\ref{thm1} is an existence type result, the proof is not constructive because it is generally impossible to find an analytic solution of a nonlinear system. Nevertheless, we give some examples of nonlinear systems with analytic solutions to demonstrate the transformations. Note that in the proof, the $\hat{w}_1,\hat{w}_2$ are introduced in order to guarantee the convergence of the integrals. If the integrals converge for either $s\to\infty$ or $s\to -\infty$, then we do not need replace $w_1,w_2$ with $\hat{w}_1,\hat{w}_2$

\begin{example}
Consider the system
$$\dot{\left[\begin{array}{c}x_1\\
x_2\\
x_3\end{array}\right]}=\left[\begin{array}{c}x_1+x_2^2\\
-x_2+x_3^2\\
-x_3\end{array}\right].$$
The flow of the vector field is given by
$$\phi_t(x)=\left[\begin{array}{c}e^t\left(x_1+\frac{x_2^3}{3}+\frac{x_3^2x_2}{6}+\frac{x_3^4}{30}\right)-
\frac{e^{-2t}}{3}(x_2^2+2x_3^2x_2+x_3^4)+
\frac{e^{-3t}}{2}(x_3^2x_2+x_3^4)-
\frac{e^{-4t}}{5}x_3^4\\
e^{-t}(x_2+x_3^2)-e^{-2t}x_3^2\\
e^{-t}x_3 \end{array}\right].$$
We have $w_1(x)=x_2^2$, $w_2(x)=x_3^2$, and $w_3(x)=0$. Since both
$$\int_0^\infty e^{-s} w_1(\phi_s(x))\ ds=\int_0^\infty e^{-s}(e^{-s}(x_2+x_3^2)-e^{-2s}x_3^2)^2\ ds=\frac{1}{3}x_2^2+\frac{1}{6}x_2x_3^2+\frac{1}{30}x_3^4$$
and
$$\int_0^\infty e^s w_2(\phi_s(x))\ ds=\int_0^\infty e^s(x_3e^{-s})^2\ ds=x_3^2$$
converge,
$$H(x)=\left[\begin{array}{c}x_1+\int_0^\infty e^{-s} w_1(\phi_s(x))\ ds\\
x_2+\int_0^\infty e^s w_2(\phi_s(x))\ ds\\
x^3
\end{array}\right]=\left[\begin{array}{c}x_1+\frac{1}{3}x_2^2+\frac{1}{6}x_2x_3^2+\frac{1}{30}x_3^4\\
x_2+x_3^2\\
x^3
\end{array}\right],$$
and the transformation is given by
\begin{eqnarray*}
y_1&=&x_1+\frac{1}{3}x_2^2+\frac{1}{6}x_2x_3^2+\frac{1}{30}x_3^4,\\
y_2&=&x_2+x_3^2,\\
y_3&=&x_3.
\end{eqnarray*}
We can verify
\begin{eqnarray*}
\dot{y}_1&=&\dot{x}_1+\frac{2}{3}x_2\dot{x}_2+\frac{1}{6}(2x_2x_3\dot{x}_3+\dot{x}_2x_3^2)
+\frac{2}{15}x_3^3\dot{x}_3   \\
&=&x_1+x_2^2+\frac{2}{3}x_2(-x_2+x_3^2)+\frac{1}{6}(2x_2x_3(-x_3)+(-x_2+x_3^2)x_3^2)+\frac{2}{15}x_3^3(-x_3)\\
&=&x_1+\frac{1}{3}x_2^2+\frac{1}{6}x_2x_3^2+\frac{1}{30}x_3^4=y_3\\
\dot{y}_2&=&\dot{x}_2+2x_3\dot{x_3}=-x_2+x_3^2+2x_3(-x_3)=-x_2-x_3^2=-y_2\\
\dot{y}_3&=&\dot{x}_3=-x_3=-y_3
\end{eqnarray*}
and the system becomes linear.

\end{example}

\begin{example}
Consider the system
$$
\begin{array}{lcl}
\dot{x}_1&=&x_1\\
\dot{x}_2&=&-x_2+x_1x_3^2\\
\dot{x}_3&=&-x_3.
\end{array}$$
The flow of the system is given by
$$\phi_t(x)=\left[\begin{array}{c}
e^tx_1\\
e^{-t}(x_2+x_1x_3^2 t)\\
e^{-t}x_3
\end{array}\right].$$
We have $w_1(x)=0$, $w_2(x)=x_1x_3^2$, and $w_3(x)=0$.
Both integrals
$$\int_0^\infty e^{s} w_2(\phi_s(x))\ ds=\int_0^\infty e^{s}(e^{s}x_1(e^{-s}x_3)^2\ ds$$
$$-\int_{-\infty}^0 e^{s} w_2(\phi_s(x))\ ds=-\int_{-\infty}^0 e^{s}(e^{s}x_1(e^{-s}x_3)^2\ ds$$
diverge, so we need to modify the $w_2$.

Instead of using a $C^\infty$ $\alpha(x)$ to construct $\hat{W}$, here we use a simpler but discontinuous modification of $W(x)$.
For any $M>0$, define
$$\hat{w}_2(x)=\left\{\begin{array}{ll}
w_2(x),&\mbox{if $|w_2(x)|\leq M$}\\ \\
0,&\mbox{otherwise.}\end{array}\right.$$
Then
$$\hat{w}_2(\phi_s(x))=\left\{\begin{array}{ll}
x_1x_3^2e^{-s},&\mbox{if $|x_1x_3^2|e^{-s}\leq M$}\\
0,&\mbox{if $|x_1x_3^2|e^{-s}> M$.}\end{array}\right.
=\left\{\begin{array}{ll}
x_1x_3^2e^{-s},&\mbox{if $s\geq -\ln \frac{M}{|x_1x_3^2|}$}\\
0,&\mbox{if $s<-\ln \frac{M}{|x_1x_3^2|}$.}
\end{array}\right.$$
and
\begin{eqnarray*}
H(x)&=&\left[\begin{array}{c}x_1\\
x_2-\int_{-\infty}^0 e^s \hat{w}_2(\phi_s(x))\ ds\\
x^3
\end{array}\right]\\
&=&\left[\begin{array}{c}x_1\\
x_2-x_1x_3^2\int_{-\ln \frac{M}{|x_1x_3^2|}}^0 \ ds\\
x^3
\end{array}\right]\\
&=&\left[\begin{array}{c}x_1\\
x_2-x_1x_3^2\ln \frac{M}{|x_1x_3^2|}\\
x^3
\end{array}\right].
\end{eqnarray*}

Note that $H(x)$ is continuous at the place where $|x_1x_3^2|=0$ and
$$\lim_{|x_1x_3^2|\to 0} H(x)=x.$$
On the other hand, the subspaces defined by $x_1=0$ and $x_3=0$ are all invariant subspaces of the system, and on these subspaces, the system is linear.

On the region defined by $0<|x_1x_3^2|$, we have
$$\frac{d}{dt}x_1x_3^2 =\dot{x}_1x_3^2+2x_1x_3\dot{x}_3=-x_1x_3^2$$
and
$$x_1x_3^2\frac{d}{dt}\ln \frac{M}{|x_1x_3^2|}=\left\{\begin{array}{ll}
-\frac{x_1x_3^2}{|x_1x_3^2|}\frac{d}{dt}x_1x_3^2& x_1x_3^2>0\\
\frac{x_1x_3^2}{|x_1x_3^2|}\frac{d}{dt}x_1x_3^2& x_1x_3^2<0
\end{array}\right\}=-\frac{d}{dt}x_1x_3^2=x_1x_3^2.$$
Therefore for
$y(t)=H(x(t))$,
\begin{eqnarray*}\dot{y}&=&\left[\begin{array}{c}\dot{x}_1\\ \\
\dot{x}_2-\frac{d}{dt}(x_1x_3^2)\ln \frac{M}{|x_1x_3^2|}-x_1x_3^2\frac{d}{dt}\ln \frac{M}{|x_1x_3^2|}\\ \\
\dot{x}^3
\end{array}\right]\\
&=&\left[\begin{array}{c}x_1\\ \\
-x_2+x_1x_3^2+x_1x_3^2\ln \frac{M}{|x_1x_3^2|}-x_1x_3^2\\ \\
-x_3
\end{array}\right]\\
&=&\left[\begin{array}{c}x_1\\ \\
-\left(x_2-x_1x_3^2\ln \frac{M}{|x_1x_3^2|}\right)\\ \\
-x^3
\end{array}\right]\\
&=&\left[\begin{array}{c}y_1\\
-y_2\\
-y_3
\end{array}\right]
\end{eqnarray*}

\end{example}

\section{Global Hartman-Grobman Theorem}

In this section, we consider the special case that all the eigenvalues of $Df(0)$ are located in
the left-half complex plane. In such cases, we can given a homeomorphism on the region of attraction of the
origin which changes the nonlinear system to a linear system inside the whole region of a traction.

By reverse the time, the same is also true if the eigenvalues of $Df(0)$ are located in
the right-half complex plane and region of attraction is replaced by the region of repulsion.

\begin{thm}
Under the condition of Theorem~\ref{thm1}, if all the eigenvalues of $Df(0)$ are located in the left-half complex plane,
then there is transformation $y=H(x)$, $H(0)=0$, $H$ is homeomorphism from the region
of attraction of the origin of~(\ref{eq2}) to $\mathbb{R}^n$, and  the system~(\ref{eq2})
is changed into the linear system
$$\dot{y}=Ay, \ \ \ \ A=Df(0)$$
 on the region of attraction of $0$ under $y=H(x)$.
\end{thm}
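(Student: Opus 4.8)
The plan is to build the global conjugacy by extending the local conjugacy of Theorem~\ref{thm1} along the flow. Write $\A$ for the region of attraction of the origin, the set of all $x\in E$ with $\phi_t(x)\to 0$ as $t\to\infty$. Since all eigenvalues of $A=Df(0)$ have negative real part, $0$ is asymptotically stable, so $\A$ is an open, connected, flow-invariant neighborhood of $0$, and the linear flow $e^{At}$ has region of attraction all of $\mathbb{R}^n$. Because every eigenvalue lies in the left-half plane, the block $P$ in the proof of Theorem~\ref{thm1} is absent and the transformation there reduces to a single stable integral; it furnishes a homeomorphism $h$ from a neighborhood of $0$ onto a neighborhood of $0$ that conjugates $\phi_t$ to $e^{At}$, with $h(0)=0$. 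Using asymptotic stability I would then shrink this neighborhood to a forward-invariant open set $U\subset\A$ (a sublevel set of a Lyapunov function, say) on which $h$ remains a conjugating homeomorphism onto a forward-invariant neighborhood $U'=h(U)$ of $0$ for the linear flow.

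Next I would introduce the first-entry time $\tau(x)=\inf\set{t\ge 0:\phi_t(x)\in U}$. For $x\in\A$ the forward orbit converges to $0\in U$, so $\tau(x)$ is finite; since $U$ is open and forward-invariant, $\tau$ is continuous and $\phi_t(x)\in U$ for all $t\ge\tau(x)$. I then define
\[
H(x)=e^{-A\,T}\,h\bigl(\phi_T(x)\bigr)\qquad\text{for any }T\ge\tau(x).
\]
Forward-invariance of $U$ together with the conjugacy identity $h(\phi_s(z))=e^{As}h(z)$ (valid whenever $z,\phi_s(z)\in U$) makes this independent of $T$, so $H$ is well defined on $\A$; taking $T=0$ shows $H=h$ on $U$, hence $H(0)=0$. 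The same identity yields the global conjugacy $H(\phi_t(x))=e^{At}H(x)$ for all $x\in\A$ and all $t$, and near any point $H$ coincides with $x\mapsto e^{-A T}h(\phi_T(x))$ for a \emph{fixed} $T$, a composition of homeomorphisms. Therefore $H$ is continuous and is a local homeomorphism of $\A$ into $\mathbb{R}^n$.

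Finally I would upgrade this local homeomorphism to a global one by a properness argument, which also supplies injectivity and surjectivity at once and so bypasses any delicate discussion of backward completeness of orbits. The key estimate is that $\norm{H(x)}\to\infty$ as $x$ leaves every compact subset of $\A$: if $x_k\to\partial\A$ (or $x_k\to\infty$ when $\A$ is unbounded), then $\tau(x_k)\to\infty$, since a limit point on $\partial\A$ lies on an invariant orbit that never meets $U$; as $\phi_{\tau(x_k)}(x_k)\in\partial U$ stays bounded away from $0$ while $\norm{e^{-A\tau(x_k)}}$ grows without bound ($-A$ having all eigenvalues in the right-half plane), it follows that $\norm{H(x_k)}\to\infty$. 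Hence $H$ is proper. A proper local homeomorphism from the connected manifold $\A$ onto $\mathbb{R}^n$ is a covering map, and since $\mathbb{R}^n$ is simply connected the cover is trivial, so $H$ is a homeomorphism of $\A$ onto $\mathbb{R}^n$ intertwining $\dot x=f(x)$ with $\dot y=Ay$.

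I expect the main obstacle to be precisely this properness/boundary estimate, namely establishing that $\tau(x_k)\to\infty$ as $x_k$ approaches $\partial\A$. This rests on the invariance of $\partial\A$ under the flow and on continuous dependence of $\phi_t$ on initial data, and it requires uniform care in the unbounded-basin case, where one must rule out orbits that enter $U$ quickly despite starting far out. Everything else—well-definedness, the conjugacy identity, and continuity—is routine once the forward-invariant $U$ and the conjugating $h$ from Theorem~\ref{thm1} are in hand.
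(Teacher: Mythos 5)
The first half of your proposal---the forward-invariant neighborhood $U$, the entry time $\tau$, the extension $H(x)=e^{-AT}h(\phi_T(x))$, its well-definedness, the conjugacy identity, and the local-homeomorphism property---is correct, and it is essentially the mechanism the paper itself uses: the paper packages the extension as an explicit integral formula (its equation~(\ref{h})), obtained by cutting off $W$ and integrating along characteristics, and then it proves continuity from the very identity $H(x)=\hat{\phi}_{-t}(H(\phi_t(x)))$ that you take as the definition. The divergence is in the final step, global bijectivity, and that is where the problem lies.

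Your properness estimate is a genuine gap, and it is not one that ``uniform care'' can repair: the claim that $\tau(x_k)\to\infty$ as $x_k\to\infty$ in $\A$ is false. Take $n=1$, $E=\mathbb{R}$, $f(x)=-x-x^3$, so $A=-1$ and $\A=\mathbb{R}$. Since $\int_1^\infty \frac{dx}{x+x^3}=\frac12\ln 2<\infty$, every orbit starting at $R>1$ enters $U=(-1,1)$ within time $\frac12\ln 2$; hence $\tau$ stays bounded as $x_k\to\infty$, $\norm{e^{-A\tau(x_k)}}$ stays bounded, and $\norm{H(x_k)}$ does not tend to infinity --- in fact $H(\A)$ is bounded. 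Worse, this failure is not an artifact of your particular $H$: any $H$ with $H(0)=0$ that transforms the system into $\dot y=-y$ must satisfy $H(\phi_t(x))=e^{-t}H(x)$, and since the backward orbit of $x=1$ blows up at the finite time $-\frac12\ln 2$, the image of the entire half-line $(0,\infty)$, which is the single orbit $\set{\phi_t(1):t>-\frac12\ln 2}$, equals $\set{e^{-t}H(1):t>-\frac12\ln 2}$, a bounded interval. So no conjugating homeomorphism of $\A$ onto $\mathbb{R}^n$ exists for this system: completeness of a flow is a conjugacy invariant, the linear flow is complete, and the nonlinear flow on $\A$ need not be backward complete. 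The theorem as stated is therefore false without an extra hypothesis (for instance backward completeness of the flow on $\A$, which holds automatically when $\A$ is bounded with $\overline{\A}\subset E$; in that case orbits in the invariant set $\A$ stay in a compact subset of $E$, your limit argument for $\tau(x_k)\to\infty$ goes through, and the covering-space conclusion does close). You should also know that the paper's own proof founders on exactly the same rock: its surjectivity step picks $y(t_0)\in H(U)$ and ``lets $x(t)$ be the solution with $x(t_0)=H^{-1}(y(t_0))$,'' tacitly assuming that this solution extends backward to $t=0$, which is precisely the backward completeness that fails in the example above.
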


\begin{proof}
Let
$$W(x)=f(x)-Df(0)x$$
then $f(x)=Ax+W(x)$.

Consider the partial differential equations
\begin{equation}\label{eq1}
Ah(x)-W(x)=Dh(x)(Ax+W(x))
\end{equation}
$$h(0)=0,\ \ \ \  Dh(0)=0.$$
For a solution $h(x)$ of~(\ref{eq1}), if we define
$H(x)=x+h(x),$
then $H(x)$ is a local homeomorphism in a neighborhood of $0$ by the inverse function theorem.
Under the transformation
$y(t)=H(x(t))$,
$$\dot{y}=\dot{x}+Dh(x(t))\dot{x}=Ax+W(x)+Dh(x)(Ax+W(x))=Ax+W(x)+Ah(x)-W(x)=Ay$$
and the system becomes a linear system.

The characteristic equations of (\ref{eq1}) is
\begin{eqnarray}
\dot{x}&=&Ax+W(x)\label{eq3} \\
\dot{z}&=&Az-W(x)\label{eq4}
\end{eqnarray}
The invariant manifold $z=h(x)$ of the system whose tangent space at $0$ is the $x$-space gives us the solution of (\ref{eq1}). Note that (\ref{eq3}) is the original system and (\ref{eq4}) is a linear
system driven by the solution of (\ref{eq3}).

As in~(\ref{map}), if we define
$$\hat{h}(x)=-\int_{-\infty}^0 e^{-As}\hat{W}(\phi_s(x))\ ds$$
where $\phi_s(x)$ is flow of~(\ref{eq3}) and $\hat{W}(x)=W(\alpha(x)x)$ with the $\alpha(x)$ in~(\ref{alpha}), then
for any solution $x(t)$ of~(\ref{eq3}),
\begin{eqnarray*}
\hat{h}(x(t))&=&-\int_{-\infty}^0 e^{-As}\hat{W}(\phi_s(x(t)))\ ds\\
&=&-\int_{-\infty}^0 e^{-As}\hat{W}(x(t+s)))\ ds\\
&=&-\int_{-\infty}^t e^{-A(\tau-t)}\hat{W}(x(\tau)))\ d\tau
\end{eqnarray*}
and for $z(t)=\hat{h}(x(t))$
$$\dot{z}=-\hat{W}(x)-A\int_{-\infty}^t e^{-A(\tau-t)}\hat{W}(x(\tau)))\ d\tau=Az-\hat{W}(x).$$
So
$$z(t)=\hat{h}(x(t))$$
is a solution of~(\ref{eq4}) if $x(t)\in N_{_M}$.

For any solution $x(t)$ of~(\ref{eq3}) with the initial value $x(0)$ in the region of attraction, there exists a $t_0>0$ such that
$x(t)\in N_{_M}$ for all $t\geq t_0$, which means that $z(t)=\hat{h}(x(t))$ is a solution of~(\ref{eq4}) for all $t\geq t_0$.

For a solution of $x(t)$ of~(\ref{eq3}),
\begin{equation} \label{z}
z(t)=e^{(t-\rho)A}\hat{h}(x(\rho))-\int_\rho^t e^{(t-\sigma)A} W(x(\sigma))\ d\sigma
\end{equation}
is a solution of~(\ref{eq4}) for all $t\neq \rho$, and if $\rho>t_0$,
$z(t)=\hat{h}(x(t))$ in a neighborhood of $\rho$ because they all satisfy~(\ref{eq4}) with the same initial condition
 at $t=\rho$.

(\ref{z}) can be rewritten as
\begin{eqnarray*}
z(t)&=&-e^{(t-\rho)A}\int_{-\infty}^0 e^{-As}\hat{W}(\phi_s(x(\rho)))\ ds-\int_\rho^t e^{(t-\sigma)A} W(x(\sigma))\ d\sigma\\
&=&-\int_{-\infty}^0 e^{-(\rho+s-t)A}\hat{W}(\phi_s(x(\rho)))\ ds-\int_\rho^t e^{(t-\sigma)A} W(x(\sigma))\ d\sigma\\
&=&-\int_{-\infty}^{\rho-t} e^{-\tau A}\hat{W}(\phi_{\tau-t-\rho}(x(\rho)))\ d\tau+\int_0^{\rho-t} e^{-\tau A}
W(\phi_\tau(x(t)))\ d\tau\\
&=&-\int_{-\infty}^{\rho-t} e^{-\tau A}\hat{W}(\phi_{\tau}(x(t)))\ d\tau+\int_0^{\rho-t} e^{-\tau A}
W(\phi_\tau(x(t)))\ d\tau\\
&=&-\int_{-\infty}^0 e^{-\tau A}\hat{W}(\phi_{\tau}(x(t)))\ d\tau+\int_0^{\rho-t} e^{-\tau A}
\left(W(\phi_\tau(x(t)))-\hat{W}(\phi_\tau(x(t)))\right)\ d\tau
\end{eqnarray*}
Let $\rho\to\infty$
\begin{equation} \label{z2}
z(t)=-\int_{-\infty}^0 e^{-\tau A}\hat{W}(\phi_{\tau}(x(t)))\ d\tau+\int_0^{\infty} e^{-\tau A}
\left(W(\phi_\tau(x(t)))-\hat{W}(\phi_\tau(x(t)))\right)\ d\tau
\end{equation}
Note that both integrals in~(\ref{z2}) converge if $x(t)$ is a solution of~(\ref{eq3}) with initial condition $x(0)$ in the region of attraction. The first integral converges because that $\hat{W}(\phi_{\tau}(x(t)))$
is bounded for all $\tau\in (-\infty,0]$. The second integral converges because when $\tau$ is large enough, $\phi_{\tau}(x(t))\in N_{_M}$ and
$W(\phi_\tau(x(t)))-\hat{W}(\phi_\tau(x(t)))=0$.

If we define
\begin{equation}\label{h}
h(x)=-\int_{-\infty}^0 e^{-\tau A}\hat{W}(\phi_{\tau}(x))\ d\tau+\int_0^{\infty} e^{-\tau A}
\left(W(\phi_\tau(x))-\hat{W}(\phi_\tau(x))\right)\ d\tau,
\end{equation}
then $z(t)$ in~(\ref{z2}) has the form that $z(t)=h(x(t))$, i.e.
$h(x)$ defined in~(\ref{h}) is an invariant manifold of~(\ref{eq3}, \ref{eq4}). Also since $\phi_\tau(0)\equiv 0$ for any $\tau$, $W(0)=\hat{W}(0)=0$ and
$DW(0)=D\hat{W}(0)=0$,
$$h(0)=0,\ \ \ \  Dh(0)=0.$$
So $y=x+h(x):=H(x)$ satisfy $\dot{y}=Ay$ for any $x$ in the region of attraction of the origin of~(\ref{eq2}).

Let $U$ be a neighborhood of the origin such that $H$ is a homeomorphism on $U$, then for any $\hat{y}\in \mathbb{R}^n$, since the
linear system $\dot{y}=Ay$ is asymptotically stable, there is $t_0>0$ such that $y(t_0)\in H(U)$ for the solution $y(t)$ with initial condition $y(0)=\hat{y}$. Let $x(t)$ be the solution of~(\ref{eq2}) such that $x(t_0)=H^{-1}(y(t_0))$. Then $\hat{x}=x(0)$ is the unique point such that $H(\hat{x})=\hat{y}$ by the uniqueness of the initial value problem.
So $H$ is one-to-one and onto. 
The continuities of $H$ and $H^{-1}$ follow the continuity of the flows with respect to the initial points.
Let $\hat{\phi}_t(y)$ be the flow of $\dot(y)=Ay$. For any $x$ in the region of attraction, we can choose $t$ large enough such that 
$\phi_t(x)\in U$. Then
$$H(x)=\hat{\phi}_{-t}(H(\phi_t(x))).$$
So $H(x)$ is continuous on the region of attraction. Conversely for any $y$, we can choose $t$ large enough such that
$\hat{\phi}_t(y)\in H(U)$. Then
$$H^{-1}(y)=\phi_{-t}(H^{-1}(\hat{\phi}_t(y))).$$
So $H^{-1}(y)$ is continuous at any $y$.

\end{proof}

\begin{rem}
The expression~(\ref{h}) can be simplified under two cases.
If
$$\int_{-\infty}^0 e^{-\tau A} W(\phi_{\tau}(x))\ d\tau$$
converges, then we can take $M=\infty$, so $\hat{W}\equiv W$ and
\begin{equation}
h(x)=-\int_{-\infty}^0 e^{-\tau A}W(\phi_{\tau}(x))\ d\tau.
\end{equation}
On the other hand, if
$$\int_0^{\infty} e^{-\tau A} W(\phi_{\tau}(x))\ d\tau$$
converges, then we can take $M=\epsilon=0$, so $\hat{W}\equiv 0$ and
\begin{equation}\label{infcv}
h(x)=\int_0^{\infty} e^{-\tau A} W(\phi_{\tau}(x))\ d\tau.
\end{equation}
\end{rem}

\begin{example}
Consider the system
$$\dot{x}=-\sin x.$$
Clearly the region of attraction of $0$ is $(-\pi, \pi)$.
We have
\begin{equation}\label{dt}
dt = \frac{-1}{\sin x} dx.
\end{equation}
The flow $\phi=\phi_t(x)$ of the system satisfies
\begin{equation}\label{et}
\frac{\sin \phi_t(x)}{1-\cos \phi_t(x)}=\frac{\sin (x)}{1-\cos(x)} e^{t}=\cot\left(\frac{x}{2}\right) e^t.
\end{equation}
For this system, $A=-1$ and $W(x)=x-\sin (x)$. Applying the change of variable
$$t=\ln \left(\tan\left(\frac{x}{2}\right) \frac{\sin \phi}{1-\cos \phi}\right)$$
on~(\ref{infcv}), we have (see ~(\ref{dt}) and~(\ref{et}))
$$dt = \frac{-1}{\sin \phi} d\phi,\ \ \ e^t= \tan\left(\frac{x}{2}\right) \frac{\sin \phi}{1-\cos \phi},\ \
\phi_0(x)=x,\ \ \phi_\infty(x)=0$$
and (\ref{infcv}) becomes
$$h(x)=\int_0^{\infty} e^{\tau} (\phi_\tau(x)-\sin \phi_\tau (x))\ d\tau=\tan\left(\frac{x}{2}\right) \int_0^x \frac{\phi -\sin (\phi)}{1-\cos(\phi)} \ d\phi.$$
We can see that
$$y=H(x)=x+ \tan\left(\frac{x}{2}\right) \int_0^x \frac{\phi -\sin (\phi)}{1-\cos(\phi)} \ d\phi$$
maps the region of attraction $(-\pi, \pi)$ of $0$ onto $\mathbb{R}$.
\end{example}


\begin{thebibliography}{99}
\bibitem{arn88} V.I. Arnold, {\em Geometric Methods in the Theorey of Ordinary Differential Equations},
2nd ed., Springer-Verlag, New York, 1998.

\bibitem{gro59} D. Grobman, {\em Homeomorphisms of Systems of Differential Equations (Russian)}, Dokl. Akad., Nauk.,
Vol. 128, 1959, pp 880-881.

\bibitem{har60} P. Hartman, {\em A Lemma in the Theory of Structural Stability of
Differential Equations}, Proc. Amer. Math. Soc.,
Vol. 11, 1960, pp 610-620.

\bibitem{har64} P. Hartman, {\em Ordinary Differential Equations},
Wiley, New York, 1964.

\bibitem{per01} L. Perko, {\em Differential Equations and Dynamical Systems},
3rd ed., Springer-Verlag, New York, 2001.


\bibitem{poi90} H. Poincar\'{e}, {\em Sur le probl\'{e}me des trois corps et les \'{e}quations}, Dynamique Acta Math.,
Vol. 13, 1890, pp 1-270.

\bibitem{wan99} X. Wang and W. Dayawansa, {\em On Global Lyapnuv Functions of Nonlinear Autonumous System}, Procedings of the 38th
IEEE Conference on Decision \& Control, Phoenix, 1999.
\end{thebibliography}
\end{document}